\documentclass{article}

\usepackage{arxiv}
\usepackage{amsmath}
\usepackage{amssymb}
 \usepackage{amscd}

\usepackage[utf8]{inputenc} 
\usepackage[T1]{fontenc}    
\usepackage{url}            
\usepackage{booktabs}       
\usepackage{nicefrac}       
\usepackage{microtype}      
\usepackage{graphicx}
\usepackage{doi}
\usepackage[english]{babel}
\usepackage{graphicx,epstopdf,epsfig}
\usepackage{amsfonts,epsfig,fancyhdr,graphics, hyperref,amsmath,amssymb}
\usepackage{tikz}
\usetikzlibrary{arrows.meta, positioning}
\usetikzlibrary{arrows.meta, positioning, calc, fit}
\usetikzlibrary{arrows.meta, positioning, fit}

\newtheorem{theorem}{Theorem}[section]

\newtheorem{lemma}[theorem]{Lemma}

\newtheorem{proposition}[theorem]{Proposition}
\newtheorem{remark}[theorem]{Remark}

\newenvironment{proof}[1][Proof]{\textbf{#1.} }{\ \rule{0.5em}{0.5em}}
\newtheorem{example}[theorem]{Example}
\newtheorem{question}{Question}

\AtEndEnvironment{example}{\hfill \ensuremath{\blacksquare}}
\AtEndEnvironment{remark}{\hfill \ensuremath{\blacksquare}}

\newcommand{\Path}{\mathcal P}
\newcommand{\diag}{\operatorname{diag}}
\newcommand{\rank}{\operatorname{rank}}
\newcommand{\termrank}{\operatorname{termrank}}
\newcommand{\spec}{\operatorname{spec}}

\newcommand{\numinus}{\nu_-}
\newcommand{\ones}{\mathbf{1}}

\author{ Bassam Mourad \\
	Department of Mathematics,\\  Faculty of Science, Lebanese University,\\ Beirut, Lebanon\\
	\texttt{ bmourad@ul.edu.lb} \\
    \And
     Issam Kaddoura \\
	Department of Mathematics,\\ Lebanese International University,\\  Saida, Lebanon\\ 
	\texttt{issam.kaddoura@liu.edu.lb} \\
	\And
	Hassan Issa \\
	Department of Mathematics,\\  Faculty of Science, Lebanese University,\\ Beirut, Lebanon\\
	\texttt{ hassan.issa@ul.edu.lb} \\
   }

\begin{document}

\title{ Negative index, matchings, and nonnegative eigenvalues of tridiagonal stochastic matrices }

\maketitle

\begin{abstract}
We study negative eigenvalues of $n\times n$ stochastic matrices whose off-diagonal support is constrained by a sparse graph.  The main tool is a matching-based inertia principle: if $G$ is bipartite with  matching number $\mu(G)$, $S$ is a real symmetric matrix supported on $G$ with nonnegative  diagonal entries  and whose  negative  index (i.e. number of negative eigenvalues counted with their multiplicities) is denoted by \( \numinus(S) \),  then
\[
        \numinus(S)\leq \mu(G).
\]
  In particular, every $n\times n$ nonnegative tridiagonal stochastic matrix $P$ satisfies
\(
        \numinus(P)\leq \left\lfloor \frac n2\right\rfloor.
\)
Consequently, after ordering the eigenvalues of $P$  in the decreasing order, we have
\(
        \lambda_{\lceil n/2\rceil}(P)\geq0,
        \  \text{and hence} \ 
        \lambda_2(P)\geq0,  \mbox{ for } n\geq3.
\)
This gives an all-dimensional strengthening of the previously known $4\times4$ tridiagonal stochastic result.  Next, we show that this tridiagonal bound is sharp in every dimension in both reducible and irreducible cases. Finally,  we explore some possible extension and raise some open questions.
\end{abstract}

\keywords{Tridiagonal stochastic matrices, negative index,  matching number, reversible Markov chains, negative eigenvalues. }
\textbf{AMS CLASSIFICATION}: 15A18, 15B51, 05C50, 60J10.




\section{Introduction}

A row-stochastic (stochastic for short) matrix  $P=(p_{ij})$ is a nonnegative matrix whose each row sum is equal to 1. Such a matrix  is the transition matrix of a finite Markov chain where its eigenvalues encode convergence, periodicity and oscillatory behaviour (see, e. g., \cite{LevinPeresWilmer2017}).  The present paper is concerned with the negative part of the spectrum when the  transition matrices are  supported on  a bipartite graph.

The motivating case is that of $n\times n$  tridiagonal stochastic matrices,
\[
P=
\begin{pmatrix}
 p_{11} & p_{12} & 0 & \cdots & 0 \\
 p_{21} & p_{22} & p_{23} & \ddots & \vdots \\
 0 & p_{32} & p_{33} & \ddots & 0 \\
 \vdots & \ddots & \ddots & \ddots & p_{n-1,n} \\
 0 & \cdots & 0 & p_{n,n-1} & p_{nn}
\end{pmatrix},
\  P \mbox{ is nonnegative}  \mbox{ and }
P\ones=\ones,
\] where $\ones$ is the all-ones vector.
These are precisely the transition matrices of finite birth-death chains on the path graph $\Path_n$  on $n$ vertices.  

It is well known (see, e.g., \cite{GolubVanLoan2013}) that for an  $n\times n$   tridiagonal matrix to have completely real eigenvalues, two requirements must be met: all main diagonal elements must be real and the product of every two opposite  elements in the subdiagonal and superdiagonal must be non-negative.    Thus, any stochastic tridiagonal matrix has real eigenvalues and so they can be ordered  nonincreasingly as $1=\lambda_1\geq \lambda_2\geq \ldots\geq\lambda_n\geq -1$ (by the Perron-Frobenius theorem \cite{BermanPlemmons1994}).
 
Tridiagonal stochastic matrices are the finite-state, discrete-time analogue of
birth-death dynamics: the chain moves only between neighbouring states, as in
queue-length, population-size, inventory-level, and path random-walk models.
The spectral theory of such one-dimensional Markov chains is classical and is
closely related to three-term recurrences and orthogonal polynomials
\cite{Dominguez2021,KarlinMcGregor1957}.

The motivation for this work is to understand how the sparsity pattern of a reversible transition matrix
limits the dimension of its negative spectral subspace which is the subspace spanned by all eigenvectors corresponding to negative eigenvalues.   
 In this context, our main results show that  the negative part of the spectrum is
controlled not only by the number of states, but also by a concrete combinatorial invariant of the allowed transition graph. In other words, the result captures how the geometry of the support graph limits the
number of independent negative, or oscillatory, spectral modes. In addition,  from the Markov-chain point of view, the second largest eigenvalue is a central  
 spectral parameter:
besides the stationary eigenvalue \(1\), it is the largest algebraic
nonstationary eigenvalue and hence  it controls the slowest nontrivial mode of
relaxation toward equilibrium in the reversible setting. 
 For a reversible transition matrix \(P\), the detailed-balance condition implies
that \(P\) is similar to a real symmetric matrix, and hence all eigenvalues of
\(P\) are real \cite{AldousFill2002,LevinPeresWilmer2017}.  Negative eigenvalues correspond to alternating spectral modes:
if \(Pv=\lambda v\) with \(\lambda<0\), then
\[
        P^t v=\lambda^t v=(-1)^t|\lambda|^t v,
\]
 after \(t\)
steps, and therefore alternates sign from one step to the next \cite{LevinPeresWilmer2017}.  Such alternating modes are particularly natural for bipartite
support graphs, where transitions occur between two vertex classes.

    It is therefore natural to ask whether the nearest-neighbour structure of a
tridiagonal stochastic matrix forces the dominant nonstationary algebraic mode,
namely the second largest eigenvalue, to be nonnegative.  This leads to the following question which was raised in  \cite{VagenendeVerbekenGuerry2026}.
\begin{question}\label{q1:q}
For an $n\times n$  tridiagonal stochastic matrix, must the second largest eigenvalue be nonnegative?
\end{question}

The case $n=4$ was recently settled in \cite{VagenendeVerbekenGuerry2026} who resolved a conjecture in  \cite{RanTeng2024}, stating that the second largest eigenvalue of every $4\times4$ tridiagonal stochastic matrix is nonnegative.  The conjecture arose in the study of eigenvalue regions for stochastic matrices with prescribed zero patterns.  the authors in \cite{VagenendeVerbekenGuerry2026} proved the conjectured nonnegativity of $\lambda_2$ for irreducible $4\times4$ tridiagonal stochastic matrices and then extended it to the reducible $4\times4$ case. It is worthy to point out that
their proof is tied to a low-dimensional eigenvalue-region analysis.  The present approach is different in the sense that  it identifies a graph-theoretic obstruction to negative eigenvalues and thereby gives a  proof for any dimension $n$.

In particular,  we answer Question \ref{q1:q}  in more general context. In fact, we prove a sharper inertia result stated in the following theorem. 

\begin{theorem}\label{thm:intro-main}
Let $P$ be an $n\times n$  tridiagonal stochastic matrix and let $ \numinus(P)$ denote the number of negative  eigenvalues of $P$. Then,
\[
        \numinus(P)\leq \left\lfloor \frac n2\right\rfloor.
\]
Consequently, after ordering the eigenvalues as
\(
       1= \lambda_1(P)\geq\lambda_2(P)\geq\cdots\geq\lambda_n(P),
\)
one has
\(
        \lambda_{\lceil n/2\rceil}(P)\geq0.
\)
In particular, $\lambda_2(P)\geq0$ for every $n\geq3$.
\end{theorem}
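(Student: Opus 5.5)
Our plan is to reduce to a symmetric matrix and then bound its negative index by a matching argument. First we symmetrize $P$. If every off-diagonal pair satisfies $p_{i,i+1}p_{i+1,i}>0$, the diagonal similarity $D=\diag(d_1,\dots,d_n)$ with $d_{i+1}/d_i=\sqrt{p_{i,i+1}/p_{i+1,i}}$ gives a real symmetric tridiagonal matrix $S=DPD^{-1}$. Its diagonal is $(p_{11},\dots,p_{nn})$, which is nonnegative, and its off-diagonal entries are $\sqrt{p_{i,i+1}p_{i+1,i}}$. Since $S$ is similar to $P$, $\numinus(P)=\numinus(S)$. If some product $p_{i,i+1}p_{i+1,i}$ vanishes, then $P$ is block upper (or lower) triangular with respect to the splitting $\{1,\dots,i\}\cup\{i+1,\dots,n\}$. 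The spectrum is then the union of the spectra of the two diagonal blocks, which are smaller tridiagonal nonnegative matrices. We handle these blocks by the same argument, or by induction on the number of such cuts. We never actually use row sums in the inertia bound, only nonnegativity of the diagonal. Finally, we check that $\lfloor a/2\rfloor+\lfloor b/2\rfloor\le\lfloor (a+b)/2\rfloor$.

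The key step is the claim that a real symmetric $S$ supported on a bipartite graph $G$, with nonnegative diagonal, satisfies $\numinus(S)\le\mu(G)$; for the path, $\mu(\Path_n)=\lfloor n/2\rfloor$. We would use the Courant--Fischer/Cauchy principle: if $W$ is a subspace of dimension $n-\mu$ on which $x^TSx\ge0$, then $\numinus(S)\le \mu$. By K\"onig's theorem, a bipartite graph has a vertex cover $C$ with $|C|=\mu(G)$. Its complement $I=V\setminus C$ is an independent set of size $n-\mu$. Take $W=\{x:\ x_j=0 \text{ for } j\in C\}$. For $x\in W$ we get $x^TSx=\sum_{i\in I}s_{ii}x_i^2\ge0$, because no edges of $G$ join vertices of $I$, so all off-diagonal terms vanish. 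Hence $S$ restricted to $W$ is positive semidefinite of dimension $n-\mu$, and $S$ has at most $\mu$ negative eigenvalues. For the path this is immediate without K\"onig: take $I$ to be the odd-indexed vertices, of size $\lceil n/2\rceil$.

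The consequences then follow. At most $\lfloor n/2\rfloor$ eigenvalues are negative, so the top $n-\lfloor n/2\rfloor=\lceil n/2\rceil$ eigenvalues are nonnegative, i.e.\ $\lambda_{\lceil n/2\rceil}\ge0$. For $n\ge3$ we have $\lceil n/2\rceil\ge2$, so $\lambda_2\ge\lambda_{\lceil n/2\rceil}\ge0$.

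The main obstacle is the reducible/degenerate case where symmetrization fails, since the inertia principle is stated for symmetric matrices. We would handle it by the block-triangular decomposition above, noting that the spectrum with multiplicities is the union of the block spectra. An alternative is a continuity argument: perturb $P$ to $P_\varepsilon$ with all off-diagonal products positive, for instance $(1-\varepsilon)P+\varepsilon T$ with $T$ a fixed irreducible tridiagonal stochastic matrix with positive diagonal. The bound $\lambda_{\lceil n/2\rceil}(P_\varepsilon)\ge0$ passes to the limit because real eigenvalues depend continuously on the entries. Either route gives the result for all tridiagonal stochastic $P$.
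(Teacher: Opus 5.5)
Your proof is correct. Its skeleton matches the paper's: symmetrize, bound the negative index of a bipartite-supported symmetric matrix, and treat the reducible case by a block-triangular splitting. Both main steps, however, are carried out differently.

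For the inertia bound, the paper first removes the diagonal with Weyl monotonicity. It then uses that the hollow part $\bigl(\begin{smallmatrix}0&B\\ B^T&0\end{smallmatrix}\bigr)$ has exactly $\rank(B)$ negative eigenvalues, and finishes with $\rank(B)\le\termrank(B)=\mu(G_B)\le\mu(G)$. You instead build a coordinate subspace of dimension $n-\mu(G)$, supported on the complement of a K\"onig vertex cover, on which the form reduces to $\sum_{i\in I}s_{ii}x_i^2\ge0$, and conclude by a dimension count against the negative eigenspace.

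What your route buys:
\begin{itemize}
\item For the path it is entirely elementary: take the odd indices.
\item Your subspace argument works for any independent set, so it gives $\numinus(S)\le\tau(G)=n-\alpha(G)$ for every graph, where $\tau(G)$ and $\alpha(G)$ are the vertex cover and independence numbers. Bipartiteness enters only to identify $\tau(G)=\mu(G)$.
\item By K\"onig's theorem applied to $G-U$, $\tau(G)$ is exactly the paper's extension bound $\min_U\bigl(|U|+\mu(G-U)\bigr)$. So your method explains that extension directly.
\end{itemize}

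What the paper's route buys: it needs only the easy inequality $\rank\le\termrank$ rather than K\"onig. It also passes through the value-dependent bound $\numinus(S)\le\rank(B)$, which can be strictly smaller than $\mu(G)$.

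For the reducible case, the paper builds the full Frobenius normal form via strongly connected components, proving each is an interval. Your cut at an index with $p_{i,i+1}p_{i+1,i}=0$ gives block-triangularity with two contiguous tridiagonal blocks immediately and is shorter. The induction must be stated for nonnegative tridiagonal, not necessarily stochastic, matrices, which you correctly anticipate. Your continuity alternative is also sound, because $\lambda_{\lceil n/2\rceil}\ge0$ is a closed condition and limits of real eigenvalues are real.
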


The integer $\lfloor n/2\rfloor$ is not accidental.  It is the matching number of the path graph $\Path_n$.  Noticing that the path graph is bipartite, then we show that the preceding theorem can be obtained via symmetrization from  the following   graph-theoretic result which we prove below.
 \begin{theorem} if $G$ is bipartite and $S$ is real symmetric, supported on $G$, with nonnegative diagonal entries, then $\numinus(S)\leq \mu(G)$ where   $ \mu(G)$ is matching number of $G$.
\end{theorem}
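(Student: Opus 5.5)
We argue via König's theorem and the Courant–Fischer (Cauchy interlacing) principle. Let $G$ be bipartite on $V=A\cup B$. By König's theorem there is a vertex cover $C\subseteq V$ with $|C|=\mu(G)$: every edge of $G$ has at least one endpoint in $C$. Put $U=V\setminus C$. Then $U$ is an independent set of $G$, with $|U|=n-\mu(G)$.

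Since $S$ is supported on $G$, every off-diagonal entry $s_{ij}$ with $i,j\in U$ and $i\neq j$ vanishes, because $\{i,j\}$ would be an edge joining two vertices outside the cover. Hence the principal submatrix $S[U]$ is diagonal, with entries $s_{ii}\geq0$. It is therefore positive semidefinite.

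Now let $W=\operatorname{span}\{e_i: i\in U\}$. For every $x\in W$ we have
\[
x^{T}Sx=\sum_{i\in U}s_{ii}x_i^2\geq 0 .
\]
Suppose $\numinus(S)>\mu(G)$. Then the span $N$ of eigenvectors for the negative eigenvalues satisfies $\dim N>\mu(G)$. It follows that $\dim N+\dim W>n$, so there is a nonzero $x\in N\cap W$. On $N\setminus\{0\}$ the quadratic form is strictly negative, which contradicts $x^TSx\geq0$. Hence $\numinus(S)\leq \mu(G)$. Equivalently, Cauchy interlacing gives $\lambda_{n-\mu}(S)\geq \lambda_{\min}(S[U])\geq 0$, so at most $\mu$ eigenvalues are negative.

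The only step with real content is the reduction from the matching number to an independent set of size $n-\mu(G)$. This is exactly where bipartiteness enters, through König's equality between the vertex cover number and the matching number. For a general graph the argument only yields the bound $n-\alpha(G)$, which can exceed $\mu(G)$; for example, odd cycles can have more negative eigenvalues than $\mu(G)$. The remaining steps are routine linear algebra.
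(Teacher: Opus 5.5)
Your proof is correct, and it takes a genuinely different route from the paper. The paper removes the diagonal by Weyl monotonicity, $\numinus(S)\leq\numinus(W)$, where $W=\bigl(\begin{smallmatrix}0&B\\ B^T&0\end{smallmatrix}\bigr)$ is the hollow part. It reads off $\numinus(W)=\rank(B)$ from the $\pm\sigma_i$ symmetry of the spectrum of $W$, and finishes with $\rank(B)\leq\termrank(B)=\mu(G_B)\leq\mu(G)$. You instead find an $(n-\mu(G))$-dimensional coordinate subspace on which the quadratic form is nonnegative, and conclude by Courant--Fischer.

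The trade-off is on the combinatorial side. You need the nontrivial direction of K\"onig's theorem, namely a vertex cover of size $\mu(G)$. The paper needs only the elementary inequality $\rank\leq\termrank$ and no min--max theorem. The paper's route also gives the value-sensitive intermediate bound $\numinus(S)\leq\rank(B)$, which can be strictly smaller than $\mu(G)$.

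Your route, in turn, is more flexible. It uses $s_{ii}\geq0$ only for $i\notin C$, and it uses bipartiteness only through K\"onig. So the same argument gives $\numinus(S)\leq\tau(G)=n-\alpha(G)$ for an arbitrary graph, where $\tau(G)$ is the vertex cover number. This recovers the paper's ``Possible extension'' in one step. If $G-U$ is bipartite, then $\mu(G-U)=\tau(G-U)$ by K\"onig, and $U$ together with a minimum cover of $G-U$ covers $G$. Hence $|U|+\mu(G-U)\geq\tau(G)$, with equality when $U$ is a minimum cover of $G$. Thus the paper's optimized bound $\min_U\bigl(|U|+\mu(G-U)\bigr)$ equals exactly $\tau(G)$, which your argument reaches directly.
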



The ingredients in the proofs are classical: Weyl monotonicity for Hermitian eigenvalues, the spectral structure of bipartite block matrices, and the rank--term-rank bound for matrices with prescribed zero pattern; see, for example, \cite{BrualdiRyser1991,GodsilRoyle2001,HornJohnson2013,LovaszPlummer1986}.  Related inertia questions for graph adjacency matrices have been studied extensively.  In particular, Ma, Yang, and Li \cite{MaYangLi2013} study positive and negative inertia indices of unweighted graph adjacency matrices for trees, unicyclic graphs, and bicyclic graphs, while Li and Song \cite{LiSong2013} treat weighted graph adjacency matrices in analogous classes.  For zero-diagonal weighted adjacency matrices of bipartite graphs, the symmetry of the spectrum about the origin implies that the positive and negative inertia indices coincide and are both equal to one half of the rank.  The form needed here is slightly different: we allow an arbitrary nonnegative diagonal perturbation of a bipartite-supported symmetric matrix.  This extension follows immediately from Weyl monotonicity, but it is essential for Markov chains, where diagonal entries represent holding probabilities (i.e.
 the probability that the chain remains at the same state for one transition step).  For reversible Markov chains, the usual symmetrization  transfers this estimate directly to the transition matrix (see Subsection 2.2 below).  For reducible tridiagonal stochastic matrices, Frobenius normal form reduces the problem to irreducible tridiagonal blocks and singleton blocks as we shall see below.

This paper is organized as follows. In Section 2, we collect some of the basic facts needed for our study. Section 3 contains our main results which can be  viewed as a constrained inverse inertia problem: among
graph-supported symmetric matrices with nonnegative diagonal, we bound the
negative index in terms of a purely combinatorial invariant, the matching
number of the support graph.  The proof combines three standard ingredients: monotonicity of negative inertia under positive semidefinite diagonal perturbations, the singular value structure of the bipartite block matrix $\bigl(\begin{smallmatrix}0&B\\ B^T&0\end{smallmatrix}\bigr)$, and the rank--term-rank bound for bipartite support patterns.  Applying this principle to reversible Markov chains gives a matching-number bound on the number of negative eigenvalues.  In the final section, we show that the tridiagonal bound is sharp in every dimension and we explore a possible extension.

\section{Preliminaries}

 In this section, we collect some of the basic facts needed for our analysis.

First,   if $M$ is a real matrix all of whose eigenvalues are real,  then the \emph{negative index}  of   $M$, denoted by
\(
        \numinus(M)
\)
 is the number of negative eigenvalues of $M$ counting algebraic multiplicity.
We shall use the term inertia in the standard sense for real symmetric matrices,
but our results concern only the negative index 
\(
        \nu_-(S).
\)
  With this notation,  a good starting point is the following lemma which is needed and will be used later.

\begin{lemma}
\label{lem:monotonicity}
Let \(A\) be a real symmetric \(n\times n\) matrix and let \(D\succeq0\)  (i.e. positive semi-definite).
Then
\[
        \numinus(A+D)\leq \numinus(A).
\]
\end{lemma}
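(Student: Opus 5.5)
The plan is to use the Courant--Fischer (min--max) characterization of the eigenvalues of a real symmetric matrix. Alternatively, one can invoke Weyl's monotonicity inequality directly. Order the eigenvalues of $A$ and of $A+D$ nonincreasingly as $\lambda_1(A)\geq\cdots\geq\lambda_n(A)$ and $\lambda_1(A+D)\geq\cdots\geq\lambda_n(A+D)$. Since $D\succeq0$, we have $x^T(A+D)x\geq x^TAx$ for every $x\in\mathbb R^n$. Then
\[
\lambda_k(M)=\max_{\dim V=k}\ \min_{x\in V,\ \|x\|=1} x^TMx
\]
gives $\lambda_k(A+D)\geq\lambda_k(A)$ for every $k=1,\dots,n$.

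Next, let $m=\numinus(A)$. Because $A$ is symmetric, its eigenvalues are real and $\numinus(A)$ counts exactly the indices $k$ with $\lambda_k(A)<0$. By the ordering, these are the last $m$ indices, so $\lambda_k(A)\geq0$ for all $k\leq n-m$. The termwise inequality then yields $\lambda_k(A+D)\geq\lambda_k(A)\geq0$ for $k\leq n-m$. Hence at most the last $m$ eigenvalues of $A+D$ can be negative, which gives $\numinus(A+D)\leq m=\numinus(A)$.

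An equivalent route, which I could use instead of the eigenvalue-by-eigenvalue comparison, is a subspace-dimension argument. Let $W$ be the span of the eigenvectors of $A$ with nonnegative eigenvalues, so $\dim W=n-\numinus(A)$ and $x^TAx\geq0$ on $W$. Then $x^T(A+D)x\geq0$ on $W$ as well. The negative eigenspace $N$ of $A+D$ satisfies $x^T(A+D)x<0$ for all nonzero $x\in N$, so $N\cap W=\{0\}$, and therefore $\dim N\leq n-\dim W=\numinus(A)$. No step here is a real obstacle. The only point needing care is that algebraic and geometric multiplicities coincide for symmetric matrices, so counting eigenvalues with multiplicity matches the dimension of the negative eigenspace; the spectral theorem guarantees this.
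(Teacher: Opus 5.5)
Your proof is correct and takes essentially the same route as the paper: you use Weyl's monotonicity $\lambda_k(A)\le\lambda_k(A+D)$, which you derive from Courant--Fischer instead of citing it, and then count eigenvalues. The only difference is the direction of the count: you carry the $n-\numinus(A)$ nonnegative eigenvalues of $A$ up to $A+D$, while the paper carries the $\numinus(A+D)$ negative eigenvalues of $A+D$ down to $A$; your alternative subspace-intersection argument is also valid but not needed.
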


\begin{proof}
We use Weyl's monotonicity theorem for Hermitian matrices (\cite{HornJohnson2013}). Since \(D\succeq0\), we have
\[
        (A+D)-A=D\succeq0.
\]
Thus, \(A\preceq A+D\) in the Loewner order; see, for example,
\cite{HornJohnson2013}.  Therefore, if the eigenvalues are arranged in
nondecreasing order,
\[
        \lambda_1(A)\leq\lambda_2(A)\leq\cdots\leq\lambda_n(A),
\]
and similarly for \(A+D\), Weyl's monotonicity theorem gives
\[
        \lambda_j(A)
        \leq
        \lambda_j(A+D),
        \qquad j=1,\ldots,n.
\]

Let
\[
        m=\numinus(A+D).
\]
Then, the first \(m\) eigenvalues of \(A+D\) are negative:
\[
        \lambda_1(A+D),\ldots,\lambda_m(A+D)<0.
\]
By the preceding eigenvalue inequalities, for every \(1\leq j\leq m\),
\[
        \lambda_j(A)
        \leq
        \lambda_j(A+D)
        <0.
\]
Thus,  \(A\) has at least \(m\) negative eigenvalues.  Hence
\[
        m\leq \numinus(A).
\]
Since \(m=\numinus(A+D)\), we conclude that
\[
        \numinus(A+D)\leq \numinus(A).
\]
\end{proof}

\subsection{Support graphs and matchings}

Let \(G=(V,E)\) be a finite simple graph with \(V=\{1,\ldots,n\}\), and whose adjacency matrix is $A_G$.  A real
symmetric matrix \(S=(s_{ij})\) is said to be \emph{supported on \(G\)} if
\[
        s_{ij}=0
        \qquad\text{whenever}\qquad
        i\neq j \ \text{and}\ \{i,j\}\notin E.
\]  Equivalently,
\[
        (J-I-A_G)\circ S=0,
\]
where \(J\) is the \emph{all-ones} matrix, $I$ is the identity matrix of appropriate size and \(\circ\) denotes \emph{the Hadamard product}.
This formulation emphasizes that support is a zero-pattern condition.
Thus, the graph \(G\) specifies the allowed off-diagonal nonzero positions of
\(S\).  The diagonal entries are unrestricted by this convention; in particular,
diagonal entries do not contribute edges to the support graph.

For a not necessarily symmetric \(n\times n\) matrix \(P=(p_{ij})\), the
\emph{off-diagonal support graph} of \(P\) is the simple undirected graph on
\(\{1,\ldots,n\}\) in which two distinct vertices \(i\) and \(j\) are adjacent
if and only if
\[
        p_{ij}\neq0
        \qquad\text{or}\qquad
        p_{ji}\neq0.
\]
Thus, this graph records whether there is an off-diagonal interaction between
\(i\) and \(j\), while forgetting its direction and ignoring its  diagonal entries \(p_{ii}\).
  In particular, the three cases
\[
        (p_{ij}>0,\,p_{ji}=0),\qquad
        (p_{ij}=0,\,p_{ji}>0),\qquad
        (p_{ij}>0,\,p_{ji}>0)
\]
all give the same undirected edge \(\{i,j\}\).

Equivalently, we shall say that the off-diagonal support graph of \(P\)  \emph{is contained}
in \(G=(V,E)\) means that, for every pair of distinct vertices \(i,j\),
\[
        \{i,j\}\notin E
        \quad\Longrightarrow\quad
        p_{ij}=0 \ \text{and}\ p_{ji}=0.
\]

In particular, if \(P\) is tridiagonal, then its off-diagonal support graph has
edges only between consecutive indices.  Hence, it is a subgraph of the path
\(
       \Path_n.
\)
Consequently, it is bipartite, with bipartition given by the odd and even
indices:
\[
        \{i: i \text{ is odd}\}
        \qquad\text{and}\qquad
        \{i: i \text{ is even}\}.
\]
The diagonal entries of \(P\) do not affect this conclusion.

\begin{example}\label{ex:support-graph}
Let
\[
P=
\begin{pmatrix}
\frac12 & \frac12 & 0 & 0\\
\frac13 & \frac13 & \frac13 & 0\\
0 & 0 & \frac12 & \frac12\\
0 & 0 & \frac14 & \frac34
\end{pmatrix}.
\]
The nonzero off-diagonal entries occur only in the positions corresponding to
the unordered pairs
\[
        \{1,2\},\qquad \{2,3\},\qquad \{3,4\}.
\]
Therefore,  the off-diagonal support graph of \(P\) is exactly the path graph
\(\Path_4\).  Notice that the edge \(\{2,3\}\) is present even though only one
direction occurs:
\[
        p_{23}>0,
        \qquad
        p_{32}=0.
\]
Thus, an undirected edge is included whenever at least one of \(p_{ij}\) and
\(p_{ji}\) is nonzero.  
\end{example}
A \emph{matching} in $G$ is a set of pairwise vertex-disjoint edges.  The \emph{matching number} of $G$ is denoted by $\mu(G)$, is the maximum number of vertex-disjoint edges.  For the path graph $\Path_n$, clearly one has
\[
        \mu(\Path_n)=\left\lfloor\frac n2\right\rfloor.
\]
Indeed, the alternating edges $\{1,2\},\{3,4\},\ldots$ give a matching of size $\lfloor n/2\rfloor$, while every matching uses two vertices per edge and therefore has size at most $\lfloor n/2\rfloor$.

A graph is \emph{bipartite}  if $V=X\sqcup Y$ and every edge has one endpoint in $X$ and the other in $Y$.
Given any \(X\times Y\) matrix \(B=(B_{xy})\), its \emph{bipartite support
graph} is the bipartite graph \(G_B\) with vertex set \(X\sqcup Y\), where the
elements of \(X\) index the rows of \(B\) and the elements of \(Y\) index the
columns.  For \(x\in X\) and \(y\in Y\), we place an edge between \(x\) and
\(y\) precisely when the corresponding matrix entry is nonzero; that is,
\[
        \{x,y\}\in E(G_B)
        \quad\Longleftrightarrow\quad
        B_{xy}\neq 0.
\]
Thus, \(G_B\) exists canonically for every \(X\times Y\) matrix \(B\), and it is
bipartite by construction, since every edge joins a row-vertex to a
column-vertex. Consequently, choosing nonzero entries of \(B\) with no two in the same row or column is exactly equivalent to choosing edges of \(G_B\) with no common endpoint.  
 At this point, we recall the definition of  \emph{termrank}$(B)$ as the maximum number of nonzero entries of $B$ with no two in the same row or column. Therefore,  in our notation, we have
\[
        \termrank(B)=\mu(G_B).
\]
 On the other hand, if  $\rank(B)=r$, then $B$ has a nonzero $r\times r$ minor.  Expanding this determinant, at least one permutation term is nonzero, and such  term selects $r$ nonzero entries of $B$ with no two in the same row or column.  Hence, these entries form a matching of size $r$ in $G_B$ so that the following holds
\begin{eqnarray}
        \rank(B)\leq\termrank(B)=\mu(G_B).
\end{eqnarray}
This is the standard rank--term-rank bound in combinatorial matrix theory; see, for example, \cite[Chapter 4]{BrualdiRyser1991}. We illustrate the above concepts with the  following simple example.                                                  
\begin{example}
Let
\(
        X=\{x_1,x_2\}, \
        Y=\{y_1,y_2,y_3\},
\)
and consider the \(X\times Y\) matrix
\(
B=
\begin{pmatrix}
1 & 0 & 2\\
0 & 3 & 0
\end{pmatrix}.
\)
The nonzero entries of \(B\) are
\(
        B_{x_1y_1}=1, \
        B_{x_1y_3}=2, \
        B_{x_2y_2}=3.
\)
Hence,  the bipartite support graph \(G_B\) has vertex set
\(
        V(G_B)=X\sqcup Y
\)
and edge set
\(
        E(G_B)=
        \bigl\{
        \{x_1,y_1\},
        \{x_1,y_3\},
        \{x_2,y_2\}
        \bigr\}.
\)
Therefore, each nonzero entry of \(B\) gives one edge joining a row-vertex in \(X\)
to a column-vertex in \(Y\).
  Choose
\(
        B_{x_1y_1} \ \text{and}  \
        B_{x_2y_2}
\)
as these two entries are nonzero and lie in different rows and different columns.
Thus,
\(
        \termrank(B)\geq 2.
\)
However, \(B\) has only two rows, so no selection can contain more
than two entries with distinct rows.  Hence,
\(
        \termrank(B)=2.
\) \\
On the other hand, in the bipartite support graph \(G_B\), the two edges
\(
        \{x_1,y_1\}
        \  \text{and} \
        \{x_2,y_2\}
\)
are disjoint, and therefore form a matching of size \(2\).  Since the graph has
only two vertices on the \(X\)-side, no matching can have more than two edges.
Thus,
\(
        \mu(G_B)=2
\)
and consequently,
\(
        \termrank(B)=\mu(G_B)=2.
\)
\end{example}
\subsection{Reversible Markov chains}

Let $P=(p_{ij})$ be an $n\times n$ stochastic matrix.
A positive \emph{probability vector}  (i.e. each of its entries is  $>0$ and sum of its components is 1)  $\pi=(\pi_i)$ is stationary if $\pi^TP=\pi^T$.  The chain is \emph{reversible with respect to $\pi$}  if
\[
        \pi_i p_{ij}=\pi_jp_{ji}
        \qquad\text{for all }i,j.
\]
If the $n\times n$ diagonal matrix $\Pi$ is given by  $\Pi=\diag(\pi_1,\ldots,\pi_n)$ , then reversibility implies that
\begin{eqnarray}
        S=\Pi^{1/2}P\Pi^{-1/2}
\end{eqnarray}
is real symmetric.  Indeed,
\[
        s_{ij}=\sqrt{\frac{\pi_i}{\pi_j}}p_{ij}
        =\sqrt{\frac{\pi_j}{\pi_i}}p_{ji}
        =s_{ji}.
\]
Moreover, $S$ is similar to $P$, hence $P$ and $S$ have the same eigenvalues. For more information on this topic see, for example, \cite{Norris1997}.
\section{Main results}
This section  deals with proving a matching-based bound for the negative
index in 3 cases: a symmetric matrix with nonnegative diagonal entries,  a reversible chain  and a tridiagonal stochastic matrix where each of them  is  supported on a bipartite graph.

\subsection{Symmetric matrix with nonnegative diagonal entries  supported on a bipartite graph}
\begin{theorem}
\label{thm:graph-inertia}
Let \(G=(V,E)\) be a finite bipartite graph with \(V=\{1,\ldots,n\}\), and let \(S=(s_{ij})\) be a real symmetric
matrix supported on \(G\).  Suppose that
\[
        s_{ii}\geq0
        \qquad
        (i=1,\ldots,n).
\]
Then,
\[
        \numinus(S)\leq \mu(G).
\]
Equivalently, \(S\) has at least \(n-\mu(G)\) nonnegative eigenvalues, counted
with multiplicity.
\end{theorem}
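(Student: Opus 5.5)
We split $S$ into its diagonal and off-diagonal parts and handle each with a tool already set up in Section 2. Write $S=A+D$, where $D=\diag(s_{11},\ldots,s_{nn})$ and $A$ is $S$ with its diagonal set to zero. Since every $s_{ii}\geq0$, the matrix $D$ is positive semidefinite. Lemma \ref{lem:monotonicity} then gives $\numinus(S)=\numinus(A+D)\leq\numinus(A)$. It therefore suffices to show $\numinus(A)\leq\mu(G)$ for the zero-diagonal symmetric matrix $A$ supported on $G$.

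Fix a bipartition $V=X\sqcup Y$ of $G$ and order the vertices so that those of $X$ come first. Because $A$ has zero diagonal and every edge of $G$ joins $X$ to $Y$, all entries of $A$ inside $X\times X$ and inside $Y\times Y$ vanish. After a permutation similarity, which preserves eigenvalues, $A$ becomes
\[
A\simeq\begin{pmatrix}0&B\\ B^T&0\end{pmatrix},
\]
where $B$ is the $X\times Y$ block of $A$.

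We compute the spectrum of this block matrix from a singular value decomposition $B=U\Sigma V^T$. If $Bv_k=\sigma_k u_k$ and $B^Tu_k=\sigma_k v_k$ with $\sigma_k>0$, then the vectors $(u_k,\pm v_k)$ are eigenvectors with eigenvalues $\pm\sigma_k$. The remaining eigenvalues are zero, coming from $\ker B^T\oplus\ker B$. Hence the nonzero spectrum is $\{\pm\sigma_1,\ldots,\pm\sigma_r\}$ with $r=\rank(B)$, and so $\numinus(A)=\rank(B)$. A dimension count confirms that nothing is missed: $2r+(|X|-r)+(|Y|-r)=n$.

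It remains to bound $\rank(B)$. By the rank--term-rank inequality (1), $\rank(B)\leq\termrank(B)=\mu(G_B)$. The bipartite support graph $G_B$ has an edge $\{x,y\}$ only when $B_{xy}=s_{xy}\neq0$, and such a pair must be an edge of $G$ because $S$ is supported on $G$. So $G_B$ is a subgraph of $G$ on the same vertex set, and every matching of $G_B$ is a matching of $G$. This gives $\mu(G_B)\leq\mu(G)$. Chaining the inequalities,
\[
\numinus(S)\leq\numinus(A)=\rank(B)\leq\mu(G_B)\leq\mu(G).
\]
The equivalent formulation follows because $S$ is symmetric and so has exactly $n$ real eigenvalues.

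The only step that needs real care is the spectral description of the block matrix, specifically that the negative eigenvalues are exactly $-\sigma_k$ with the correct multiplicities. I would establish this either through the explicit eigenvector construction above or by noting that $A^2=\diag(BB^T,B^TB)$ and that the spectrum of $A$ is symmetric about $0$, because $A$ is similar to $-A$ via $\diag(I,-I)$. Everything else is direct bookkeeping with Lemma \ref{lem:monotonicity} and (1).
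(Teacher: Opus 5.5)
Your proposal is correct and follows the paper's proof step for step. It uses the diagonal/hollow splitting with Lemma~\ref{lem:monotonicity}, the bipartite block form giving $\numinus = \rank(B)$, the rank--term-rank bound, and $\mu(G_B)\leq\mu(G)$ because $G_B$ is a subgraph of $G$. Your eigenvector construction and dimension count justify the block-matrix spectrum more fully than the paper, which only asserts it.
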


\begin{proof}
Write
\[
        S=D+W,
        \qquad
        D=\operatorname{diag}(s_{11},\ldots,s_{nn}),
\]
where \(W\) is the hollow part of \(S\), namely
\[
        w_{ij}=
        \begin{cases}
        s_{ij}, & i\neq j,\\
        0, & i=j.
        \end{cases}
\]
Since \(s_{ii}\geq0\) for every \(i\), then we have
\[
        D\succeq0.
\]
By Lemma~\ref{lem:monotonicity},
\[
        \numinus(S)
        =
        \numinus(W+D)
        \leq
        \numinus(W).
\]

Now let
\[
        V=X\sqcup Y
\]
be a bipartition of \(G\).  After a simultaneous permutation of rows and
columns, which does not change the inertia, we may assume that the vertices in
\(X\) precede those in \(Y\).  Since \(S\) is supported on the bipartite graph
\(G\), there are no off-diagonal entries of \(W\) joining two vertices both in
\(X\), nor two vertices both in \(Y\).  Hence \(W\) has the block form
\[
        W=
        \begin{pmatrix}
        0 & B\\
        B^T & 0
        \end{pmatrix},
\]
where \(B\) is the submatrix with rows indexed by \(X\) and columns indexed by
\(Y\).

If \(r=\rank(B)\), then we denote by
\[
        \sigma_1,\ldots,\sigma_r,
\]
to be the nonzero singular values of \(B\) but then  the nonzero eigenvalues of \(W\)  are  given by \[
        \sigma_1,\ldots,\sigma_r,
        -\sigma_1,\ldots,-\sigma_r,
\]  so that 
\[
        \numinus(W)=r=\rank(B).
\]

Let \(G_B\) be the bipartite support graph of \(B\), with vertex classes
\(X\) and \(Y\), and with an edge \(xy\) whenever \(B_{xy}\neq0\).  Since
\(S\) is supported on \(G\), every edge of \(G_B\) is an edge of \(G\).  Hence, 
\(G_B\) is a subgraph of \(G\).  Therefore,  every matching in \(G_B\) is also a
matching in \(G\), and so
\[
        \mu(G_B)\leq \mu(G).
\]


Combining these inequalities and in view of (2.1), we obtain
\[
        \numinus(S)
        \leq
        \numinus(W)
        =
        \rank(B)
        \leq
        \termrank(B)
        =
        \mu(G_B)
        \leq
        \mu(G).
\]
This proves
\(
        \numinus(S)\leq \mu(G),
\)
and the proof now can be easily completed.
\end{proof}
\begin{remark}\label{rem:support-pattern}
Theorem~\ref{thm:graph-inertia} is purely a support-pattern statement for real symmetric matrices.  No stochasticity, row-sum condition, or nonnegativity of the off-diagonal entries is used.  The matching number enters only through the identity
\[
        \termrank(B)=\mu(G_B).
\]
 Stochasticity enters later only when a transition matrix is related to a symmetric similar matrix and when the diagonal entries $p_{ii}\geq0$ are interpreted as holding probabilities.
\end{remark}

\subsection{Reversible chains on bipartite graphs}
We start with the following theorem that deals with the case of reversible chains on bipartite graphs.
\begin{theorem}\label{thm:reversible-bipartite}
Let \(P=(p_{ij})_{i,j=1}^n\) be a finite Markov transition matrix that is
reversible with respect to a positive stationary probability vector
\(\pi=(\pi_1,\ldots,\pi_n)\). Suppose that the off-diagonal support graph of \(P\)
is contained in a bipartite graph \(G\). Then, all eigenvalues of \(P\) are real
and
\[
        \numinus(P)\leq \mu(G).
\]
Consequently, \(P\) has at least \(n-\mu(G)\) nonnegative eigenvalues, counted
with their multiplicities.
\end{theorem}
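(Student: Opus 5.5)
The plan is to reduce Theorem~\ref{thm:reversible-bipartite} to Theorem~\ref{thm:graph-inertia} by the standard symmetrization of Subsection~2.2. Set $\Pi=\diag(\pi_1,\ldots,\pi_n)$ and $S=\Pi^{1/2}P\Pi^{-1/2}$ as in (2.2). Since every $\pi_i>0$, $\Pi^{\pm1/2}$ are well-defined invertible diagonal matrices, so $S$ is similar to $P$. By reversibility, $S$ is real symmetric, as computed in Subsection~2.2. Hence $S$ has only real eigenvalues, and so does $P$, with the same algebraic multiplicities. In particular $\numinus(P)=\numinus(S)$, and for the symmetric $S$ algebraic and geometric multiplicities agree, so no ambiguity arises in counting.

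Next I will verify the hypotheses of Theorem~\ref{thm:graph-inertia} for $S$. The entries are
\[
s_{ij}=\sqrt{\pi_i/\pi_j}\,p_{ij},
\]
so $s_{ij}\neq0$ if and only if $p_{ij}\neq0$, because the scaling factor is strictly positive.

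\emph{Support.} Suppose $i\neq j$ and $\{i,j\}\notin E(G)$. The containment hypothesis gives $p_{ij}=p_{ji}=0$, hence $s_{ij}=0$. Therefore $S$ is supported on $G$.

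\emph{Diagonal.} We have $s_{ii}=p_{ii}\geq0$, since $P$ is nonnegative.

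Applying Theorem~\ref{thm:graph-inertia} then yields $\numinus(P)=\numinus(S)\leq\mu(G)$. The final assertion follows because the $n$ real eigenvalues of $P$, counted with multiplicity, split into the negative ones and the nonnegative ones, so at least $n-\mu(G)$ of them are nonnegative.

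There is essentially no obstacle. The only points needing care are that positivity of $\pi$ makes the similarity legitimate and preserves the zero pattern exactly, and that the off-diagonal support graph of a possibly nonsymmetric $P$ being contained in $G$ really does force both $p_{ij}$ and $p_{ji}$ to vanish off $E$. I will state both points explicitly. I will also note in passing that the graph $G$ need not be the exact support graph; containment suffices, since Theorem~\ref{thm:graph-inertia} only requires zeros outside $E$.
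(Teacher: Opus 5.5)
Your proposal is correct and follows essentially the same route as the paper. You symmetrize via $S=\Pi^{1/2}P\Pi^{-1/2}$, use positivity of $\pi$ to preserve the off-diagonal zero pattern and the diagonal $s_{ii}=p_{ii}\geq0$, and then apply Theorem~\ref{thm:graph-inertia} together with similarity invariance of the spectrum; your explicit remarks on why the eigenvalues are real and why containment forces both $p_{ij}$ and $p_{ji}$ to vanish make precise details the paper uses only implicitly.
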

\begin{proof}
Let $\Pi=\diag(\pi_1,\ldots,\pi_n)$ and in view of (2.2), we have
\[
        S=\Pi^{1/2}P\Pi^{-1/2}.
\]
By reversibility, $S$ is real symmetric.  Since $S$ is similar to $P$, the two matrices have the same eigenvalues.  For $i\neq j$,
\[
        s_{ij}=0\quad\Longleftrightarrow\quad p_{ij}=0,
\]
because $\pi_i>0$ for every $i$.  Thus, the off-diagonal support of $S$ is contained in $G$.  Moreover,
\[
        s_{ii}=p_{ii}\geq0.
\]
By Theorem~\ref{thm:graph-inertia},
\[
        \numinus(S)\leq\mu(G).
\]
Since $S$ is similar to $P$, this gives
\[
        \numinus(P)=\numinus(S)\leq\mu(G).
\]
\end{proof}

\subsection{Tridiagonal stochastic matrices}

We now prove the main result here which deliberately combines the birth-death-chain and tridiagonal-matrix formulations, since they describe the same class of stochastic matrices.  Reducible matrices are handled by a direct Frobenius-block reduction.

\begin{lemma}
\label{lem:irreducible-tridiagonal-symmetrizable}
Let $A=(a_{ij})$ be an $n\times n$ irreducible nonnegative tridiagonal matrix.  Then, $A$ is diagonally symmetrizable.  More precisely, there exists a positive diagonal matrix $D$ such that $DAD^{-1}$ is real symmetric.
\end{lemma}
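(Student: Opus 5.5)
The plan is to construct $D=\diag(d_1,\ldots,d_n)$ explicitly by a forward recursion along the path. First I need to see which off-diagonal entries are nonzero. Irreducibility of a tridiagonal nonnegative matrix forces $a_{i,i+1}>0$ and $a_{i+1,i}>0$ for every $i=1,\ldots,n-1$. Suppose instead that $a_{i,i+1}=0$ for some $i$. Then no index in $\{1,\ldots,i\}$ has an outgoing edge in the digraph of $A$ to $\{i+1,\ldots,n\}$, since the only possible such edge is $i\to i+1$. That contradicts strong connectivity. The same argument handles $a_{i+1,i}$. The degenerate case $n=1$ is trivial with $D=(1)$.

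Next I compute the entries of $DAD^{-1}$. They are
\[
        (DAD^{-1})_{ij}=\frac{d_i}{d_j}\,a_{ij}.
\]
The diagonal entries are unchanged, and entries off the tridiagonal band remain zero. Symmetry therefore reduces to the $n-1$ conditions
\[
        \frac{d_i}{d_{i+1}}\,a_{i,i+1}=\frac{d_{i+1}}{d_i}\,a_{i+1,i},
        \qquad i=1,\ldots,n-1,
\]
which are equivalent to $d_{i+1}^2/d_i^2=a_{i,i+1}/a_{i+1,i}$. I then set $d_1=1$ and
\[
        d_{i+1}=d_i\sqrt{\frac{a_{i,i+1}}{a_{i+1,i}}}\,,
        \qquad\text{so that}\qquad
        d_k=\prod_{i=1}^{k-1}\sqrt{\frac{a_{i,i+1}}{a_{i+1,i}}}\,.
\]
Every ratio is positive and finite by the first step, so each $d_k>0$. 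By construction the symmetric entries both equal $\sqrt{a_{i,i+1}a_{i+1,i}}$.

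The only genuine obstacle is the first step, namely justifying that both off-diagonal entries of every consecutive pair are strictly positive. Without it the recursion would divide by zero or produce $d_k=0$. Once positivity is established, the rest is a direct verification. Because the path has no cycles, the $n-1$ conditions are independent and can always be satisfied; no cycle-product consistency condition of Kolmogorov type arises.
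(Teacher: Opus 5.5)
Your proof is correct and is essentially the paper's argument: the paper sets $\rho_1=1$, $\rho_{i+1}=\rho_i\,a_{i,i+1}/a_{i+1,i}$ and takes $D=\diag(\sqrt{\rho_1},\ldots,\sqrt{\rho_n})$, which is exactly your $d_k$. Your digraph-cut argument that irreducibility forces $a_{i,i+1}>0$ and $a_{i+1,i}>0$ justifies a step that the paper only asserts.
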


\begin{proof}
Irreducibility means that every element on the subdiagonal as well as on the superdiagonal must be nonzero.  Hence, for each $i=1,\ldots,n-1$, both $a_{i,i+1}$ and $a_{i+1,i}$ are positive.  Choose $\rho_1=1$, and define recursively
\[
        \rho_{i+1}=\rho_i\frac{a_{i,i+1}}{a_{i+1,i}},
        \qquad i=1,\ldots,n-1.
\]
Then, $\rho_i>0$ and
\[
        \rho_i a_{i,i+1}=\rho_{i+1}a_{i+1,i}.
\]
Let
\(
        D=\diag(\sqrt{\rho_1},\ldots,\sqrt{\rho_n}).
\)
Then, we can write
\[
        (DAD^{-1})_{i,i+1}
        =
        \sqrt{\frac{\rho_i}{\rho_{i+1}}}\,a_{i,i+1}
        =
        \sqrt{\frac{\rho_{i+1}}{\rho_i}}\,a_{i+1,i}
        =
        (DAD^{-1})_{i+1,i}.
\]
All other off-diagonal entries are zero except the corresponding tridiagonal ones, and the diagonal entries are unchanged.  Thus, $DAD^{-1}$ is real symmetric.
\end{proof}

 The next proposition is needed for the proof of our main results. For its proof, we use a graph-theoretic approach and it would be very helpful for the reader to check the steps  of this  proof on  one of the next two examples that proceed it.
\begin{proposition}\label{prop:tridiagonal-reduction}
Let \(P\) be an \(n\times n\) tridiagonal stochastic matrix. Then \(P\) is
permutation-similar to a block upper triangular matrix, that is, there exists
a permutation matrix \(Q\) such that
\[
Q^TPQ=
\begin{pmatrix}
P_1 & * & \cdots & *\\
0 & P_2 & \ddots & \vdots\\
\vdots & \ddots & \ddots & *\\
0 & \cdots & 0 & P_m
\end{pmatrix},
\]
where each diagonal block \(P_\alpha\) is either \(1\times1\), or is an
irreducible nonnegative tridiagonal substochastic matrix. 
 Finally, the eigenvalues of \(P\) are the union, counted with algebraic
multiplicity, of the eigenvalues of the diagonal blocks \(P_\alpha\).
\end{proposition}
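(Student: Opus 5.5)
We will work with the directed graph $\Gamma(P)$ on $\{1,\ldots,n\}$ that has an arc $i\to j$ whenever $p_{ij}>0$ and $i\neq j$. Since $P$ is tridiagonal, arcs join only consecutive indices. The plan is to apply the standard Frobenius normal form and then check that, for a tridiagonal matrix, each irreducible diagonal block is itself tridiagonal.

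First, we partition $\{1,\ldots,n\}$ into the strongly connected components (classes) of $\Gamma(P)$. The claim is that every class $C$ is a set of consecutive integers $\{a,a+1,\ldots,b\}$. Indeed, if $i<k<j$ with $i,j\in C$, then any directed path from $i$ to $j$ moves in unit steps, so it passes through $k$. Likewise any path from $j$ back to $i$ passes through $k$. Hence $k$ is reachable from $i$ and $i$ is reachable from $k$, so $k\in C$.

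Moreover, when $|C|\geq2$, strong connectivity of an interval under unit-step arcs forces both $p_{l,l+1}>0$ and $p_{l+1,l}>0$ for every $a\le l<b$. The reason is that a path from $l$ to $l+1$ must use the arc $l\to l+1$, since any other route would have to leave the interval and could not return within $C$. The same argument applies to a path from $l+1$ to $l$.

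Next, the condensation of $\Gamma(P)$ (classes as vertices) is acyclic, so we can order the classes $C_1,\ldots,C_m$ topologically so that every arc goes from $C_\alpha$ to $C_\beta$ only if $\alpha\le\beta$. Let $Q$ list the indices of $C_1$, then $C_2$, and so on, keeping the natural increasing order within each class. Then $Q^TPQ$ is block upper triangular, because an entry below the block diagonal would give an arc from a later class to an earlier one. The diagonal block $P_\alpha$ is the principal submatrix $P[C_\alpha]$. Since $C_\alpha$ is an interval listed in increasing order, $P[C_\alpha]$ is tridiagonal. It is nonnegative with row sums at most $1$, hence substochastic. If $|C_\alpha|\ge2$, its sub- and superdiagonal entries are positive by the previous step, so it is irreducible; otherwise it is $1\times1$.

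Finally, the statement about eigenvalues follows from $\det(\lambda I-Q^TPQ)=\prod_\alpha\det(\lambda I-P_\alpha)$, which holds for block triangular matrices, together with the fact that similarity preserves the characteristic polynomial. The main obstacle is the interval/positivity step above, because this is exactly where the tridiagonal structure is used. The rest is the routine Frobenius normal form argument.
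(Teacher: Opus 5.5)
Your proof is correct and follows the paper's overall structure. Both arguments use the digraph $\Gamma(P)$ and show that strong components are intervals via the unit-step crossing argument. Both then order the condensation topologically, let $Q$ list the classes with increasing order inside each, and conclude block triangularity, tridiagonal substochastic diagonal blocks, and the determinant factorization.

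The one step you handle differently is irreducibility of the diagonal blocks. The paper uses a general digraph fact: a path between two vertices of a strong component can never leave that component, so the induced digraph on $E_\alpha$ is strongly connected. That argument does not use tridiagonality at all. You instead show directly that $p_{l,l+1}>0$ and $p_{l+1,l}>0$ for every consecutive pair $l,l+1$ in a nontrivial class. This makes irreducibility of the tridiagonal block immediate. It also gives exactly the positivity hypothesis that Lemma~\ref{lem:irreducible-tridiagonal-symmetrizable} needs for the symmetrization, which the paper obtains there from a separate observation that an irreducible tridiagonal matrix has nonzero sub- and superdiagonals.

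One small correction to your justification of that step. The reason is not that ``any other route would have to leave the interval.'' A path from $l$ to $l+1$ may wander inside the class first, e.g.\ $l\to l-1\to l\to l+1$. The right one-line reason is as follows. A path from $l$ to $l+1$ exists, since both lie in $C$, and it need not stay in $C$. At some step it must pass from $\{1,\ldots,l\}$ into $\{l+1,\ldots,n\}$, and with unit steps the only arc that does this is $l\to l+1$. The same argument, with roles reversed, gives $l+1\to l$.
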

\begin{proof}
Associate to \(P=(p_{ij})\) the directed graph \(\Gamma(P)\) with vertex set
\(\{1,\ldots,n\}\), where there is a directed edge \(i\to j\) precisely when
\[
        p_{ij}>0.
\]
Since \(P\) is tridiagonal, we have \(p_{ij}=0\) whenever \(|i-j|>1\). Hence
every directed edge \(i\to j\) in \(\Gamma(P)\) satisfies
\[
        |i-j|\leq 1.
\]
Thus,  every non-loop edge joins two consecutive indices.

Let
\[
        C_1,\ldots,C_m
\]
be the strongly connected components of \(\Gamma(P)\). We first prove that each
\(C_\alpha\) is an interval of \(\{1,\ldots,n\}\), meaning a set of the form
\[
        \{a,a+1,\ldots,b\}
\]
for some \(1\leq a\leq b\leq n\).

Fix one strongly connected component \(C\). Suppose \(i,k\in C\) with \(i<k\). If \(k=i+1\), then there is no integer
strictly between \(i\) and \(k\), so there is nothing to prove. Otherwise,
\(k\geq i+2\), and we may choose an integer \(j\) such that
\[
        i<j<k.
\]
We shall show that \(j\in C\).
Since \(i\) and \(k\) lie in the same strongly connected component, there is a
directed path from \(i\) to \(k\):
\[
        i=i_0\to i_1\to \cdots \to i_r=k.
\]
For each \(\ell=1,\ldots,r\), the edge \(i_{\ell-1}\to i_\ell\) is an edge of
\(\Gamma(P)\), and therefore
\[
        |i_\ell-i_{\ell-1}|\leq 1.
\]
The integer sequence \(i_0,i_1,\ldots,i_r\) starts at \(i<j\), ends at
\(k>j\), and changes by at most one at each step. Hence it must pass through
\(j\). Therefore,  there exists \(t\in\{1,\ldots,r-1\}\) such that
\[
        i_t=j.
\]
It follows that \(i\) reaches \(j\), and \(j\) reaches \(k\).

Similarly, since \(C\) is strongly connected, there is a directed path from
\(k\) back to \(i\). The same argument shows that this path must also pass
through \(j\). Hence \(k\) reaches \(j\), and \(j\) reaches \(i\).

Therefore, aches \(j\), and \(j\) reaches \(i\). Therefore \(j\) lies in the
same strongly connected component as \(i\). Since \(i\in C\), we conclude that
\(j\in C\). Hence every integer lying between two elements of \(C\) also belongs
to \(C\). Thus, \(C\) is an interval.


Now form \emph{the condensation graph}  of \(\Gamma(P)\). It is the directed graph whose vertices are the strongly connected components \(C_1,\ldots,C_m\), and where there is an edge
\(C_\alpha\to C_\beta\) if some vertex in \(C_\alpha\) has a directed edge to some vertex in \(C_\beta\). This condensation graph
is acyclic. Indeed, if there
were a directed cycle
\[
        C_{\alpha_1}\to C_{\alpha_2}\to \cdots \to C_{\alpha_r}
        \to C_{\alpha_1},
\]
then every component on this cycle would be reachable from every other
component on the cycle. Hence the union
\[
        C_{\alpha_1}\cup C_{\alpha_2}\cup \cdots \cup C_{\alpha_r}
\]
would be strongly connected, contradicting the maximality of the strongly
connected components.

Now, since the condensation graph is finite and has no directed cycles,
we may relabel the components \(C_1,\ldots,C_m\)  as 
\[
        E_1,\ldots,E_m
\]
so that every edge between distinct components goes from a lower-indexed
component to a higher-indexed component; that is, whenever
\[
        E_\alpha\to E_\beta
        \qquad\text{and}\qquad \alpha\neq\beta,
\]
we have
\[
        \alpha<\beta.
\]

If two components
are not joined by an edge in the condensation graph, their relative order may
be chosen arbitrarily.

It should be noted here that the ordering of the components is not necessarily unique.   Once such an admissible ordering
\[
        E_1,\ldots,E_m
\]
has been fixed, we define \(Q\) to  be the permutation matrix corresponding to this ordering of the vertices in the following way. 
  List the vertices of \(E_1\) first, then those of \(E_2\), and so on, always using
the natural increasing order inside each \(E_\alpha\), which we have shown to be an interval.
  If the resulting
ordered list is
\[
        \sigma(1),\ldots,\sigma(n),
\]
then
\(
        Q=\bigl[e_{\sigma(1)}\ e_{\sigma(2)}\ \cdots\ e_{\sigma(n)}\bigr], 
\) where $e_i$ is  the ith column of the $n\times n$ identity matrix.
Different admissible orderings may give different permutation matrices \(Q\),
but each such \(Q\) gives a block upper triangular matrix \(Q^TPQ\).

With this choice of \(Q\), the matrix \(Q^TPQ\) is block upper triangular.  To
see this, suppose that a block below the diagonal were nonzero.  Then, there
would exist indices \(i\in E_\beta\) and \(j\in E_\alpha\), with
\(\beta>\alpha\), such that
\[
        p_{ij}>0.
\]
This would give an edge \(i\to j\) in \(\Gamma(P)\), and hence an edge
\[
        E_\beta\to E_\alpha
\]
in the condensation graph.  But this contradicts the chosen ordering, since
\(\beta>\alpha\).  Therefore,  all blocks below the diagonal are zero, and hence we can write
\[
Q^TPQ=
\begin{pmatrix}
P_1 & * & \cdots & *\\
0 & P_2 & \ddots & \vdots\\
\vdots & \ddots & \ddots & *\\
0 & \cdots & 0 & P_m
\end{pmatrix}.
\]

We now describe the diagonal blocks. Fix a component \(E_\alpha\). Since each \(E_\alpha\) is an interval, we may write
\[
        E_\alpha=\{a_\alpha,a_\alpha+1,\ldots,b_\alpha\}.
\]
Because the indices inside \(E_\alpha\) are listed in increasing order, the
corresponding diagonal block \(P_\alpha\) is precisely the principal submatrix
of \(P\) indexed by \(\{a_\alpha,a_\alpha+1,\ldots,b_\alpha\}\). Since \(P\) is tridiagonal, then
the same condition holds inside the principal submatrix \(P_\alpha\) so that
\(P_\alpha\) is tridiagonal.

If \(E_\alpha\) contains more than one vertex, then the induced directed graph
on \(E_\alpha\) is strongly connected. To see this, take \(u,v\in E_\alpha\).
Since \(u\) and \(v\) are in the same strongly connected component of
\(\Gamma(P)\), there is a directed path from \(u\) to \(v\). This path cannot
leave \(E_\alpha\). Indeed, if it passed through some vertex \(w\notin
E_\alpha\), then \(u\) would reach \(w\), and \(w\) would reach \(v\). Since
\(v\) reaches \(u\), it would follow that \(w\) and \(u\) are mutually
reachable, so \(w\) would belong to \(E_\alpha\), a contradiction. Hence every
path between two vertices of \(E_\alpha\) stays inside \(E_\alpha\). Therefore
the directed graph of the block \(P_\alpha\) is strongly connected, and so
\(P_\alpha\) is irreducible as a nonnegative matrix.

Since \(P\) is stochastic, then so is the matrix $Q^TPQ$. For \(i\in
E_\alpha\), the corresponding row sum inside the block \(P_\alpha\) is
\[
        \sum_{j\in E_\alpha} p_{ij}.
\]
Because all entries of \(P\) are nonnegative,
\[
        \sum_{j\in E_\alpha} p_{ij}
        \leq
        \sum_{j=1}^n p_{ij}
        =
        1.
\]
Thus,  each diagonal block \(P_\alpha\) is substochastic. It is stochastic
precisely when
\[
        \sum_{j\in E_\alpha} p_{ij}=1
        \qquad\text{for every } i\in E_\alpha,
\]
equivalently, when
\[
        p_{ij}=0
        \qquad\text{whenever } i\in E_\alpha
        \text{ and } j\notin E_\alpha.
\]

Finally,  the proof can be easily completed by noticing that   \(Q^TPQ\) is block upper triangular which is similar to $P$.   
\end{proof}

\begin{example}
Consider the tridiagonal stochastic matrix
\[
P=
\begin{pmatrix}
\frac12 & \frac12 & 0        & 0        & 0\\
\frac12 & \frac12 & 0        & 0        & 0\\
0       & \frac25 & \frac15 & \frac25 & 0\\
0       & 0       & 0        & \frac12 & \frac12\\
0       & 0       & 0        & \frac12 & \frac12
\end{pmatrix}.
\]

Associate to \(P\) the directed graph \(\Gamma(P)\) on the vertex set
\(\{1,2,3,4,5\}\), where \(i\to j\) whenever \(p_{ij}>0\).  Ignoring loops,
the nontrivial directed edges are
\[
        1\leftrightarrow 2,
        \qquad
        3\to 2,
        \qquad
        3\to 4,
        \qquad
        4\leftrightarrow 5.
\]

\begin{figure}[ht]
\centering
\begin{tikzpicture}[
    vertex/.style={circle, draw, thick, minimum size=8mm},
    edge/.style={-{Latex[length=2mm]}, thick},
    bend angle=20
]

\node[vertex] (1) at (0,0) {$1$};
\node[vertex] (2) at (2,0) {$2$};
\node[vertex] (3) at (4,0) {$3$};
\node[vertex] (4) at (6,0) {$4$};
\node[vertex] (5) at (8,0) {$5$};

\draw[edge, bend left] (1) to (2);
\draw[edge, bend left] (2) to (1);

\draw[edge] (3) to (2);
\draw[edge] (3) to (4);

\draw[edge, bend left] (4) to (5);
\draw[edge, bend left] (5) to (4);


\end{tikzpicture}
\caption{The directed graph \(\Gamma(P)\) associated with \(P\). There is an edge
\(i\to j\) whenever \(p_{ij}>0\).}
\label{fig:graph-P}
\end{figure}

Thus the strongly connected components are
\[
        C_1=\{1,2\},
        \qquad
        C_2=\{3\},
        \qquad
        C_3=\{4,5\}.
\]
Each of these sets is an interval of \(\{1,\ldots,5\}\).

The condensation graph has an edge from \(\{3\}\) to \(\{1,2\}\), because
\(3\to 2\), and an edge from \(\{3\}\) to \(\{4,5\}\), because \(3\to 4\).
Hence we relabel the same components as
\[
        E_1=\{3\},
        \qquad
        E_2=\{1,2\},
        \qquad
        E_3=\{4,5\}.
\]

\begin{figure}[ht]
\centering
\begin{tikzpicture}[
    component/.style={rectangle, draw, thick, rounded corners, minimum width=18mm, minimum height=9mm},
    edge/.style={-{Latex[length=2mm]}, thick}
]

\node[component] (E1) at (0,0) {$E_1=\{3\}$};
\node[component] (E2) at (3,-1.2) {$E_2=\{1,2\}$};
\node[component] (E3) at (3,1.2) {$E_3=\{4,5\}$};

\draw[edge] (E1) to (E2);
\draw[edge] (E1) to (E3);

\end{tikzpicture}
\caption{The condensation graph of \(\Gamma(P)\). Each vertex represents a
strongly connected component of \(\Gamma(P)\).}
\label{fig:condensation-graph}
\end{figure}

With this ordering, every edge between distinct components goes from a
lower-indexed component to a higher-indexed component.

We now list the vertices by first listing the elements of \(E_1\), then those
of \(E_2\), and then those of \(E_3\).  Since each \(E_\alpha\) is an interval,
we list the elements inside each \(E_\alpha\) in increasing order.  Thus the
new ordering of the vertices is
\[
        3,\ 1,\ 2,\ 4,\ 5.
\]
Let \(Q\) be the permutation matrix corresponding to this ordering, namely
\[
Q=
\begin{pmatrix}
0&1&0&0&0\\
0&0&1&0&0\\
1&0&0&0&0\\
0&0&0&1&0\\
0&0&0&0&1
\end{pmatrix}.
\]
Equivalently, the columns of \(Q\) are
\[
        e_3,\ e_1,\ e_2,\ e_4,\ e_5.
\]
A direct computation gives
\[
Q^TPQ
=
\begin{pmatrix}
\frac15 & 0       & \frac25 & \frac25 & 0\\
0       & \frac12 & \frac12 & 0       & 0\\
0       & \frac12 & \frac12 & 0       & 0\\
0       & 0       & 0       & \frac12 & \frac12\\
0       & 0       & 0       & \frac12 & \frac12
\end{pmatrix}.
\]

\end{example}

\begin{example}
Consider the \(10\times 10\) tridiagonal stochastic matrix
\[
P=
\begin{pmatrix}
\frac12 & \frac12 & 0        & 0        & 0        & 0        & 0        & 0        & 0        & 0\\
\frac14 & \frac12 & \frac14 & 0        & 0        & 0        & 0        & 0        & 0        & 0\\
0       & \frac12 & \frac12 & 0        & 0        & 0        & 0        & 0        & 0        & 0\\
0       & 0       & \frac14 & \frac12 & \frac14 & 0        & 0        & 0        & 0        & 0\\
0       & 0       & 0        & 0        & \frac12 & \frac12 & 0        & 0        & 0        & 0\\
0       & 0       & 0        & 0        & 0        & \frac12 & \frac12 & 0        & 0        & 0\\
0       & 0       & 0        & 0        & 0        & \frac14 & \frac12 & \frac14 & 0        & 0\\
0       & 0       & 0        & 0        & 0        & 0        & 0        & \frac12 & \frac12 & 0\\
0       & 0       & 0        & 0        & 0        & 0        & 0        & \frac14 & \frac12 & \frac14\\
0       & 0       & 0        & 0        & 0        & 0        & 0        & 0        & \frac12 & \frac12
\end{pmatrix}.
\]

Associate to \(P\) the directed graph \(\Gamma(P)\) on the vertex set
\(\{1,\ldots,10\}\), where \(i\to j\) whenever \(p_{ij}>0\).  We omit loops in
the figure for clarity.  The non-loop directed edges are
\[
1\leftrightarrow 2,\qquad
2\leftrightarrow 3,\qquad
4\to 3,\qquad
4\to 5,\qquad
5\to 6,
\]
\[
6\leftrightarrow 7,\qquad
7\to 8,\qquad
8\leftrightarrow 9,\qquad
9\leftrightarrow 10.
\]

\begin{figure}[ht]
\centering
\begin{tikzpicture}[
    vertex/.style={circle, draw, thick, minimum size=7.5mm},
    edge/.style={-{Latex[length=2mm]}, thick},
    compbox/.style={draw, dashed, rounded corners, inner sep=4mm},
    bend angle=20
]

\node[vertex] (1) at (0,0) {$1$};
\node[vertex] (2) at (1.45,0) {$2$};
\node[vertex] (3) at (2.90,0) {$3$};
\node[vertex] (4) at (4.35,0) {$4$};
\node[vertex] (5) at (5.80,0) {$5$};
\node[vertex] (6) at (7.25,0) {$6$};
\node[vertex] (7) at (8.70,0) {$7$};
\node[vertex] (8) at (10.15,0) {$8$};
\node[vertex] (9) at (11.60,0) {$9$};
\node[vertex] (10) at (13.05,0) {$10$};

\draw[edge, bend left] (1) to (2);
\draw[edge, bend left] (2) to (1);

\draw[edge, bend left] (2) to (3);
\draw[edge, bend left] (3) to (2);

\draw[edge] (4) to (3);
\draw[edge] (4) to (5);

\draw[edge] (5) to (6);

\draw[edge, bend left] (6) to (7);
\draw[edge, bend left] (7) to (6);

\draw[edge] (7) to (8);

\draw[edge, bend left] (8) to (9);
\draw[edge, bend left] (9) to (8);

\draw[edge, bend left] (9) to (10);
\draw[edge, bend left] (10) to (9);

\node[compbox, fit={(1) (2) (3)}, label=below:{$\{1,2,3\}$}] {};
\node[compbox, fit={(4)}, label=below:{$\{4\}$}] {};
\node[compbox, fit={(5)}, label=below:{$\{5\}$}] {};
\node[compbox, fit={(6) (7)}, label=below:{$\{6,7\}$}] {};
\node[compbox, fit={(8) (9) (10)}, label=below:{$\{8,9,10\}$}] {};

\end{tikzpicture}
\caption{The directed graph \(\Gamma(P)\), with loops omitted. The dashed boxes
indicate the strongly connected components.}
\label{fig:graph-P-ten}
\end{figure}

From Figure~\ref{fig:graph-P-ten}, the strongly connected components are
\[
\{1,2,3\},\qquad
\{4\},\qquad
\{5\},\qquad
\{6,7\},\qquad
\{8,9,10\}.
\]
Each of these is an interval of \(\{1,\ldots,10\}\).

The condensation graph has one vertex for each strongly connected component.
There is an arrow between two components whenever there is a directed edge in
\(\Gamma(P)\) from a vertex in the first component to a vertex in the second.
Thus, the arrows are
\[
\{4\}\to \{1,2,3\},\qquad
\{4\}\to \{5\},\qquad
\{5\}\to \{6,7\},\qquad
\{6,7\}\to \{8,9,10\}.
\]

We therefore relabel the components  as
\[
E_1=\{4\},\qquad
E_2=\{1,2,3\},\qquad
E_3=\{5\},\qquad
E_4=\{6,7\},\qquad
E_5=\{8,9,10\}.
\]
With this ordering, every arrow in the condensation graph goes from a
lower-indexed component to a higher-indexed component.

\begin{figure}[ht]
\centering
\begin{tikzpicture}[
    component/.style={
        rectangle,
        draw,
        thick,
        rounded corners,
        minimum width=28mm,
        minimum height=10mm,
        align=center
    },
    edge/.style={-{Latex[length=2mm]}, thick}
]

\node[component] (E1) at (0,0) {$E_1=\{4\}$};
\node[component] (E2) at (4,1.35) {$E_2=\{1,2,3\}$};
\node[component] (E3) at (4,-1.35) {$E_3=\{5\}$};
\node[component] (E4) at (8,-1.35) {$E_4=\{6,7\}$};
\node[component] (E5) at (12,-1.35) {$E_5=\{8,9,10\}$};

\draw[edge] (E1) to (E2);
\draw[edge] (E1) to (E3);
\draw[edge] (E3) to (E4);
\draw[edge] (E4) to (E5);

\end{tikzpicture}
\caption{The condensation graph of \(\Gamma(P)\), written in Frobenius order.
Every arrow goes from a lower-indexed component to a higher-indexed component.}
\label{fig:condensation-P-ten}
\end{figure}

We now list the vertices by first listing the elements of \(E_1\), then those
of \(E_2\), and so on.  Inside each \(E_\alpha\), we list the vertices in their
natural increasing order.  Thus, the new order of the vertices is
\[
4,\ 1,\ 2,\ 3,\ 5,\ 6,\ 7,\ 8,\ 9,\ 10.
\]
Let \(Q\) be the corresponding permutation matrix. Equivalently, the columns of
\(Q\) are
\[
e_4,\ e_1,\ e_2,\ e_3,\ e_5,\ e_6,\ e_7,\ e_8,\ e_9,\ e_{10}.
\]
Then,  \(Q^TPQ\) is the matrix \(P\) written in this new order. Finally, a direct
calculation gives
\[
Q^TPQ=
\begin{pmatrix}
\frac12 & 0        & 0        & \frac14 & \frac14 & 0        & 0        & 0        & 0        & 0\\
0       & \frac12 & \frac12 & 0        & 0        & 0        & 0        & 0        & 0        & 0\\
0       & \frac14 & \frac12 & \frac14 & 0        & 0        & 0        & 0        & 0        & 0\\
0       & 0        & \frac12 & \frac12 & 0        & 0        & 0        & 0        & 0        & 0\\
0       & 0        & 0        & 0        & \frac12 & \frac12 & 0        & 0        & 0        & 0\\
0       & 0        & 0        & 0        & 0        & \frac12 & \frac12 & 0        & 0        & 0\\
0       & 0        & 0        & 0        & 0        & \frac14 & \frac12 & \frac14 & 0        & 0\\
0       & 0        & 0        & 0        & 0        & 0        & 0        & \frac12 & \frac12 & 0\\
0       & 0        & 0        & 0        & 0        & 0        & 0        & \frac14 & \frac12 & \frac14\\
0       & 0        & 0        & 0        & 0        & 0        & 0        & 0        & \frac12 & \frac12
\end{pmatrix}.
\]
\end{example}

Now we are in a position to prove one of the main results of this paper.
\begin{theorem}
\label{thm:birth-death}
Let $P$ be an $n\times n$ tridiagonal stochastic matrix.  Equivalently, let $P$ be the transition matrix of a finite birth-death chain on $\{1,\ldots,n\}$.  Then, all eigenvalues of $P$ are real and
\[
        \numinus(P)\leq\left\lfloor\frac n2\right\rfloor.
\]
Consequently,
\(
        \lambda_{\lceil n/2\rceil}(P)\geq0,
\)
 and in particular,
\(
        \lambda_2(P)\geq0 \ \text{ for } n\geq3.
\)
\end{theorem}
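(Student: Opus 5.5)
The proof reduces to blocks via Proposition~\ref{prop:tridiagonal-reduction}. Take a permutation matrix $Q$ with $Q^TPQ$ block upper triangular, with diagonal blocks $P_1,\ldots,P_m$ indexed by intervals $E_\alpha=\{a_\alpha,\ldots,b_\alpha\}$ of sizes $n_\alpha$, where $\sum_\alpha n_\alpha=n$. Each $P_\alpha$ is either $1\times1$ or an irreducible nonnegative tridiagonal substochastic matrix. The spectrum of $P$ is the union, with algebraic multiplicity, of the spectra of the $P_\alpha$. Hence it suffices to show that each block has only real eigenvalues and satisfies
\[
\numinus(P_\alpha)\leq\left\lfloor \frac{n_\alpha}{2}\right\rfloor .
\]
Then, by superadditivity of the floor, $\numinus(P)=\sum_\alpha\numinus(P_\alpha)\leq\sum_\alpha\lfloor n_\alpha/2\rfloor\leq\lfloor n/2\rfloor$.

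For a $1\times1$ block $P_\alpha=(p_{ii})$, the eigenvalue is $p_{ii}\geq0$, so $\numinus(P_\alpha)=0$. For an irreducible block with $n_\alpha\geq2$, Lemma~\ref{lem:irreducible-tridiagonal-symmetrizable} gives a positive diagonal $D$ with $S_\alpha=DP_\alpha D^{-1}$ real symmetric. This matrix is similar to $P_\alpha$, so the eigenvalues are real. It has diagonal entries $p_{ii}\geq0$, and its off-diagonal support lies in the path $\Path_{n_\alpha}$, which is bipartite under the odd/even split. Theorem~\ref{thm:graph-inertia} then yields $\numinus(P_\alpha)=\numinus(S_\alpha)\leq\mu(\Path_{n_\alpha})=\lfloor n_\alpha/2\rfloor$.

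Reversibility is not needed at all, since substochasticity of the blocks plays no role. Only nonnegativity, tridiagonality and irreducibility are used. For the same reason one cannot apply Theorem~\ref{thm:reversible-bipartite} directly when $P$ is reducible; the block reduction is what avoids this.

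For the consequences, $P$ is nonnegative and its eigenvalues are real, so order them decreasingly. At most $\lfloor n/2\rfloor$ of them are negative, so at least $n-\lfloor n/2\rfloor=\lceil n/2\rceil$ are nonnegative, giving $\lambda_{\lceil n/2\rceil}(P)\geq0$. For $n\geq3$ we have $\lceil n/2\rceil\geq2$, so $\lambda_2(P)\geq\lambda_{\lceil n/2\rceil}(P)\geq0$. The fact that $\lambda_1=1$ follows since $P\ones=\ones$ and the spectral radius of a stochastic matrix is $1$.

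The main subtle point is the reducible case. There, eigenvalue reality and the count must be assembled blockwise, and the block bounds must add up correctly, which is handled by $\sum\lfloor n_\alpha/2\rfloor\leq\lfloor n/2\rfloor$. Everything else is a direct invocation of earlier results.
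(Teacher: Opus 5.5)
Your proposal is correct and follows the paper's proof essentially step by step. Both use the block reduction of Proposition~\ref{prop:tridiagonal-reduction}, the symmetrization of irreducible blocks from Lemma~\ref{lem:irreducible-tridiagonal-symmetrizable}, Theorem~\ref{thm:graph-inertia} on $\Path_{n_\alpha}$, and the inequality $\sum_\alpha\lfloor n_\alpha/2\rfloor\leq\lfloor n/2\rfloor$; your blockwise justification that all eigenvalues are real makes explicit a point the paper's proof leaves implicit.
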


\begin{proof}
By Proposition~\ref{prop:tridiagonal-reduction}, $P$ is permutation similar to a block upper triangular matrix i.e. there exists a permutation matrix $Q$ such that
\[
Q^TP Q=
\begin{pmatrix}
P_1 & * & \cdots & *\\
0 & P_2 & \ddots & \vdots\\
\vdots & \ddots & \ddots & *\\
0 & \cdots & 0 & P_m
\end{pmatrix},
\]
where each diagonal block $P_\alpha$ is either $1\times1$, or is an irreducible nonnegative tridiagonal block.  The eigenvalues of $P$ are the union, counted with algebraic multiplicity, of the eigenvalues of the diagonal blocks $P_\alpha$.

Let $n_\alpha$ be the size of $P_\alpha$.  If $n_\alpha=1$, then $P_\alpha$ contributes its single diagonal entry as an eigenvalue.  This entry is nonnegative because $P\geq0$.  Hence
\[
        \numinus(P_\alpha)=0=\left\lfloor\frac{n_\alpha}{2}\right\rfloor.
\]

Now suppose $n_\alpha\geq2$.  Then $P_\alpha$ is an irreducible nonnegative tridiagonal block.  By Lemma~\ref{lem:irreducible-tridiagonal-symmetrizable}, there exists a positive diagonal matrix $D_\alpha$ such that
\[
        S_\alpha=D_\alpha P_\alpha D_\alpha^{-1}
\]
is real symmetric tridiagonal.  The diagonal entries of $S_\alpha$ are the same as those of $P_\alpha$, and are therefore nonnegative.  Since $S_\alpha$ is supported on the path graph $\Path_{n_\alpha}$, Theorem~\ref{thm:graph-inertia} gives
\[
        \numinus(P_\alpha)
        =
        \numinus(S_\alpha)
        \leq
        \mu(\Path_{n_\alpha})
        =
        \left\lfloor\frac{n_\alpha}{2}\right\rfloor.
\]

Summing over the diagonal blocks, we obtain
\[
        \numinus(P)
        =
        \sum_{\alpha=1}^m\numinus(P_\alpha)
        \leq
        \sum_{\alpha=1}^m\left\lfloor\frac{n_\alpha}{2}\right\rfloor
        \leq
        \left\lfloor
        \frac12\sum_{\alpha=1}^m n_\alpha
        \right\rfloor
        =
        \left\lfloor\frac n2\right\rfloor.
\]

Clearly, the number of nonnegative eigenvalues is given by
\[
        n-\left\lfloor\frac n2\right\rfloor
        =
        \left\lceil\frac n2\right\rceil.
\]
Since all eigenvalues are real, we order them as
\(
        \lambda_1(P)\geq\lambda_2(P)\geq\cdots\geq\lambda_n(P)
\)
so that
\(
        \lambda_{\lceil n/2\rceil}(P)\geq0.
\)
Finally, for $n\geq3$, we have $\lceil n/2\rceil\geq2$, and therefore,
\[
        \lambda_2(P)
        \geq
        \lambda_{\lceil n/2\rceil}(P)
        \geq0.
\]
\end{proof}


\section{ Sharpness,  Necessity of bipartiteness, Possible extension}
\subsection{Sharpness in every dimension}
The estimate in Theorem~\ref{thm:birth-death} is sharp for every \(n\geq2\).  We
illustrate this using the reflecting simple random walk on the path graph
\(\Path_n\) as shown in the following example.

\begin{example}  For \(n=2\), this transition matrix is
\[
        R_2=
        \begin{pmatrix}
        0 & 1\\
        1 & 0
        \end{pmatrix}.
\]
For \(n\geq3\), it is given by
\[
R_n=
\begin{pmatrix}
0 & 1 & 0 & \cdots & 0\\
\frac12 & 0 & \frac12 & \ddots & \vdots\\
0 & \frac12 & 0 & \ddots & 0\\
\vdots & \ddots & \ddots & \ddots & \frac12\\
0 & \cdots & 0 & 1 & 0
\end{pmatrix}.
\]
Equivalently,
\[
        (R_n)_{1,2}=1,
        \qquad
        (R_n)_{n,n-1}=1,
\]
and
\[
        (R_n)_{i,i-1}=(R_n)_{i,i+1}=\frac12
        \qquad (2\leq i\leq n-1).
\]

We claim that the eigenvalues of \(R_n\) are
\[
        \lambda_k=
        \cos\left(\frac{k\pi}{n-1}\right),
        \qquad
        k=0,1,\ldots,n-1.
\]
Set
\[
        \theta_k=\frac{k\pi}{n-1}.
\]
For each \(k=0,\ldots,n-1\), define the vector
\[
        f_k=
        \bigl(f_k(1),\ldots,f_k(n)\bigr)^T\in\mathbb{R}^n
\]
by
\[
        f_k(i)=\cos\bigl((i-1)\theta_k\bigr),
        \qquad i=1,\ldots,n.
\]
We verify that \(f_k\) is an eigenvector of \(R_n\) with eigenvalue
\[
        \lambda_k=\cos(\theta_k).
\]

For an interior index \(2\leq i\leq n-1\), we have
\[
        (R_nf_k)(i)
        =
        \frac12 f_k(i-1)+\frac12 f_k(i+1).
\]
Using the identity
\[
        \cos(x-\theta)+\cos(x+\theta)=2\cos(\theta)\cos(x),
\]
with \(x=(i-1)\theta_k\) and \(\theta=\theta_k\), we obtain
\[
        \frac12 f_k(i-1)+\frac12 f_k(i+1)
        =
        \cos(\theta_k)f_k(i).
\]
Hence,
\[
        (R_nf_k)(i)=\lambda_k f_k(i)
        \qquad (2\leq i\leq n-1).
\]

At the left endpoint,
\[
        (R_nf_k)(1)=f_k(2)=\cos(\theta_k).
\]
Since
\[
        f_k(1)=\cos(0)=1,
\]
we get
\[
        (R_nf_k)(1)
        =
        \cos(\theta_k)f_k(1)
        =
        \lambda_k f_k(1).
\]

At the right endpoint,
\[
        (R_nf_k)(n)=f_k(n-1).
\]
Now
\[
        f_k(n)=\cos((n-1)\theta_k)=\cos(k\pi),
\]
and
\[
        f_k(n-1)=\cos((n-2)\theta_k)=\cos(k\pi-\theta_k).
\]
Since \(k\) is an integer,
\[
        \sin(k\pi)=0.
\]
Therefore,
\[
        \cos(k\pi-\theta_k)
        =
        \cos(k\pi)\cos(\theta_k)+\sin(k\pi)\sin(\theta_k)
        =
        \cos(k\pi)\cos(\theta_k).
\]
Thus,
\[
        f_k(n-1)
        =
        \cos(\theta_k) f_k(n),
\]
and hence
\[
        (R_nf_k)(n)=\lambda_k f_k(n).
\]
Therefore,
\[
        R_nf_k=\lambda_k f_k.
\]

It remains to see that this list gives the full spectrum.  The numbers
\[
        \theta_k=\frac{k\pi}{n-1},
        \qquad k=0,1,\ldots,n-1,
\]
are distinct and lie in the interval \([0,\pi]\).  Since the cosine function is
strictly decreasing on \([0,\pi]\), the eigenvalues
\[
        \lambda_k=\cos(\theta_k),
        \qquad k=0,1,\ldots,n-1,
\]
are distinct.  Thus, we have found \(n\) distinct eigenvalues of the
\(n\times n\) matrix \(R_n\).  Hence they exhaust the spectrum of \(R_n\).

We now count the negative eigenvalues.  Since
\[
        0\leq \theta_k\leq \pi,
\]
we have
\[
        \cos(\theta_k)<0
        \quad\Longleftrightarrow\quad
        \theta_k>\frac{\pi}{2}.
\]
Equivalently,
\[
        \frac{k\pi}{n-1}>\frac{\pi}{2},
\]
or
\[
        k>\frac{n-1}{2}.
\]

We count the integers \(k\in\{0,1,\ldots,n-1\}\) satisfying this inequality.
If \(n\) is even, then \((n-1)/2\) is not an integer, and the condition is
equivalent to
\[
        k\geq \frac n2.
\]
Hence, the admissible integers are
\[
        \frac n2,\frac n2+1,\ldots,n-1,
\]
and there are \(n/2\) of them.

If \(n\) is odd, then \((n-1)/2\) is an integer.  The value
\[
        k=\frac{n-1}{2}
\]
gives the eigenvalue
\[
        \cos\left(\frac{\pi}{2}\right)=0,
\]
which is not negative.  Hence the negative eigenvalues correspond to
\[
        k\geq \frac{n+1}{2}.
\]
There are
\[
        n-\frac{n+1}{2}
        =
        \frac{n-1}{2}
\]
such integers.  In both cases, the number of negative eigenvalues is
\[
        \left\lfloor\frac n2\right\rfloor.
\]
Therefore,
\[
        \numinus(R_n)=\left\lfloor\frac n2\right\rfloor= \mu(\Path_n)
\]

so the estimate in Theorem~\ref{thm:birth-death} is sharp for every \(n\geq2\).

For example, when \(n=4\),
\[
R_4=
\begin{pmatrix}
0 & 1 & 0 & 0\\
\frac12 & 0 & \frac12 & 0\\
0 & \frac12 & 0 & \frac12\\
0 & 0 & 1 & 0
\end{pmatrix}.
\]
The eigenvalues are
\[
        \cos(0)=1,
        \qquad
        \cos\left(\frac{\pi}{3}\right)=\frac12,
\]
\[
        \cos\left(\frac{2\pi}{3}\right)=-\frac12,
        \qquad
        \cos(\pi)=-1.
\]
Thus
\[
        \spec(R_4)=
        \left\{
        1,\frac12,-\frac12,-1
        \right\},
\]
and hence
\[
        \numinus(R_4)=2.
\]
Since
\[
        \mu(\Path_4)=2,
\]
we obtain
\[
        \numinus(R_4)=\mu(\Path_4).
\]
\end{example}
Here is another example that shows bound sharpness in the  reducible case.
\begin{example}
Let
\(
        R_2=\begin{pmatrix}0&1\\1&0\end{pmatrix}
\) be given as in the previous example.
Clearly, $ R_2$ is  stochastic, tridiagonal, and has eigenvalues $1$ and $-1$. Now 
for $n=2m$, we  define
\[
        A=\underbrace{ R_2\oplus  R_2\oplus\cdots\oplus  R_2}_{m\text{ copies}},
\]
and for $n=2m+1$, we let
\[
        A=\underbrace{ R_2\oplus  R_2\oplus\cdots\oplus  R_2}_{m\text{ copies}}\oplus (1).
\]
It is easy to see that in  both cases $A$ is  tridiagonal, and stochastic, and its spectrum consists of $m$ copies of $-1$ together with nonnegative eigenvalues. Hence,
\[
           \numinus(A)=\left\lfloor\frac n2\right\rfloor= \mu(\Path_n).
\]

\end{example}

\begin{remark} 

Obviously, the graph-theoretic bound can be substantially small 
when the support graph has small matching number. An instance where that happens, is the star graph \(G=K_{1,n-1}\) on \(n\) vertices.
This graph is connected and bipartite, and every
two edges meet at the central vertex.  Hence,
\[
        \mu(K_{1,n-1})=1.
\]
Therefore, if \(P\) is a reversible Markov transition matrix whose actual
off-diagonal support graph is \(K_{1,n-1}\), then Theorem~\ref{thm:graph-inertia}
gives
\[
        \numinus(P)\leq 1.
\]
Equivalently, \(P\) has at least \(n-1\) nonnegative eigenvalues, counted with
multiplicity.

It is worthy to note here that if we take 
\(P\) to be a nonnegative  diagonal matrix, then its off-diagonal support graph would have no edges
and would be disconnected for \(n\geq2\); in that case all eigenvalues are
nonnegative and the sharper bound \(\numinus(P)=0\) holds.  Thus, the star
example is meant to illustrate the connected, genuinely star-supported case.

\end{remark}

\subsection{Necessity of bipartiteness}

The bipartite hypothesis in Theorem~\ref{thm:graph-inertia} cannot be removed.  The smallest obstruction is an odd cycle.  Consider the simple random walk on the triangle $K_3$:
\[
P=
\begin{pmatrix}
0&\frac12&\frac12\\
\frac12&0&\frac12\\
\frac12&\frac12&0
\end{pmatrix}.
\]
This matrix is symmetric and stochastic, and its eigenvalues are given by
\[
        1,
        \quad
        -\frac12,
        \quad
        -\frac12.
\]
Therefore,
\[
        \numinus(P)=2.
\]
However,
\[
        \mu(K_3)=1.
\]
Thus,
\[
        \numinus(P)>\mu(K_3),
\]
showing that the matching-number bound in this form is genuinely bipartite.  More generally, odd cycles are the graph-theoretic obstruction to bipartiteness.

\subsection{Possible extension}
\label{rem:odd-cycle-transversal}
 Although the clean matching bound is special to bipartite graphs, one can obtain
a useful extension for an arbitrary graph by deleting a set of vertices whose
removal leaves a bipartite graph.

Let \(G=(V,E)\) be a finite graph whose adjacency matrix is $S$, and let \(U\subseteq V\).  Let \(W=V\setminus U\), and assume that
\[
        G[W]=G-U
\]
is bipartite. Then, its adjacency matrix is  the principal submatrix \(S[W]\) is obtained from \(S\) by
deleting the rows and columns indexed by \(U\).  By Cauchy's interlacing
theorem (see, e. g., \cite[Section~4.3]{HornJohnson2013}), deleting \(|U|\) rows and the corresponding columns can remove at most
\(|U|\) negative eigenvalues.  Hence
\[
        \numinus(S[W])
        \geq
        \numinus(S)-|U|.
\]
Since \(S[W]\) is supported on the bipartite graph \(G[W]\), and still has
nonnegative diagonal entries, Theorem~\ref{thm:graph-inertia} yields
\[
        \numinus(S[W])\leq \mu(G[W]).
\]
Consequently,
\[
        \numinus(S)\leq |U|+\mu(G[W]).
\]
Optimizing over all such sets \(U\), we obtain
\[
        \numinus(S)
        \leq
        \min_{\substack{U\subseteq V\\ G-U\text{ bipartite}}}
        \bigl(|U|+\mu(G-U)\bigr).
\]
Thus, the bipartite inertia estimate extends to arbitrary graphs, at the cost of
paying for the vertices that must be deleted in order to reduce the graph to a
bipartite one.

For a more informative case, consider the graph \(G\) obtained by gluing two
triangles along a single common vertex.  Let
\[
        V(G)=\{1,2,3,4,5\},
\]
and let the edge set be
\[
        E(G)=
        \{\{1,2\},\{1,3\},\{2,3\},
          \{1,4\},\{1,5\},\{4,5\}\}.
\]
Thus, \(G\) consists of the two triangles

sharing the vertex \(1\).

The set
\[
        U=\{1\}
\]
meets every odd cycle of \(G\).  After deleting \(U\), the remaining graph is
\[
        G-U
\]
with vertex set \(\{2,3,4,5\}\) and edges
\[
        \{2,3\},
        \qquad
        \{4,5\}.
\]
Hence, \(G-U\) is bipartite, and
\[
        \mu(G-U)=2,
\]
so that 
\[
        \numinus(S)
        \leq
        |U|+\mu(G-U)
        =
        1+2
        =
        3.
\]
This is a genuinely nontrivial bound for a \(5\times5\) matrix, since the trace
condition alone would only rule out five negative eigenvalues.

Moreover, the bound is sharp in this case.  Indeed, take \(S\) to be the
adjacency matrix of \(G\):
\[
S=
\begin{pmatrix}
0&1&1&1&1\\
1&0&1&0&0\\
1&1&0&0&0\\
1&0&0&0&1\\
1&0&0&1&0
\end{pmatrix}.
\]
This matrix is real symmetric, supported on \(G\), and has nonnegative diagonal.
A direct computation gives
\[
        \operatorname{spec}(S)
        =
        \left\{
        \frac{1+\sqrt{17}}{2},
        1,
        -1,
        -1,
        \frac{1-\sqrt{17}}{2}
        \right\}.
\]
Thus, \(S\) has exactly three negative eigenvalues:
\[
        -1,\qquad -1,\qquad \frac{1-\sqrt{17}}{2}.
\]
Consequently,
\[
        \numinus(S)=3,
\]
which agrees exactly with the bound
\[
        |U|+\mu(G-U)=3.
\]
These examples illustrate the role of \(U\): each vertex in \(U\) is paid for
directly, while the remaining bipartite part is controlled by its matching
number.

 Finally, we close with the  following natural questions.
\begin{question} Under what additional conditions,  can we generalize the results here to  k-banded stochastic matrices?
\end{question}
\begin{question}
How can the present work be generalized to  block-tridiagonal stochastic matrices?
\end{question}

\end{document}